\newtheorem{theorem}{Theorem}  
\newtheorem*{theorem*}{Theorem} 
\newtheorem*{proposition*}{Proposition} 
\newtheorem*{lemma*}{Lemma}
\theoremstyle{definition}
\theoremstyle{remark}
\newtheorem*{remark*}{Remark}
\newtheoremstyle{cited}%
  {6pt}
  {6pt}
  {\itshape}
  {}
  {\bfseries}
  {\textbf{.}}
  {.5em}
  {\thmname{#1}  \thmnote{\normalfont#3}}
\theoremstyle{cited}
\newtheorem*{citedthm}{Theorem}
\newtheoremstyle{citedremark}%
  {5pt}
  {5pt}
  {\itshape}
  {}
  {\it}
  {.}
  {.5em}
  {\thmname{#1} \thmnote{\normalfont#3}}
\theoremstyle{citedremark}
\newtheorem*{citedrmk}{Remark}
\newcommand{\Q}{\mb{Q}}
\def\={\;=\;}  \def\+{\,+\,}
\def\be{\begin{equation}}  \def\ee{\end{equation}}
\def\coloneqqq{\;\coloneqq\;}
\def\={\;=\;} \def\+{\,+\,}     \def\Q{\Bbb Q}   
  \def\W{\widetilde W}    \def\p{\partial}
\def\Res#1{\underset{#1}{\mathrm{Res}}}
\title{A curious identity that implies Faber's conjecture}
\author{Elba Garcia-Failde}
\address{\parbox{\linewidth}{Institut des Hautes \'{E}tudes Scientifiques, 35 Route de Chartres, 91440 Bures-sur-Yvette, France\\
and Universit\'{e} de Paris, B\^{a}timent Sophie Germain, 8 Place Aur\'{e}lie Nemours, Paris, France \vspace{0.15cm}}}
\email{garciafailde@ihes.fr}
\author{Don Zagier}
\address{\parbox{\linewidth}{Max-Planck-Institut f\"{u}r Mathematik, Vivatsgasse 7, Bonn 53111, Germany\\
and International Centre for Theoretical Physics, Strada Costiera 11, Trieste 34014, Italy \vspace{0.15cm}}}
\email{dbz@mpim-bonn.mpg.de}
\begin{document}

\begin{abstract}
We prove that a curious generating series identity implies Faber's intersection number conjecture
(by showing that it implies a combinatorial identity already given in~\cite{GFKLS19}) and give a new proof of 
Faber's conjecture by directly proving this identity.



\end{abstract}

\maketitle

We recall one of the equivalent forms of Faber's conjecture, now a theorem, on proportionalities 
of kappa-classes on the moduli space $\mathcal{M}_g$ of curves of genus~$g\geq 2$:

\begin{theorem*}[Faber's Intersection Number Conjecture~\cite{Faber1999}]\label{thm:Faber}
	Let $n \geq 2$ and $g \geq 2$. For any $d_1,\dots, d_n\geq 1$, $d_1+\cdots+d_n = g-2+n$, there exists a constant $C_g$ that only depends on $g$ such that
	\begin{equation}\label{Faber_ctt}
	\frac {1}{(2g-3+n)!}
	\int_{\overline{\mathcal{M}}_{g,n}} \lambda_g\lambda_{g-1} \prod_{i=1}^n \psi_i^{d_i} (2d_i-1)!! \= C_g\,.
	\end{equation}
\end{theorem*}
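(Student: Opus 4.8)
The plan is to separate the intersection theory from the combinatorics: first convert \eqref{Faber_ctt} into a self-contained identity between generating series, and then prove that identity by direct manipulation. To organize the reduction I would work with the normalized numbers
\[
N_g(d_1,\dots,d_n)\coloneqq\frac{1}{(2g-3+n)!}\int_{\Mgn}\lambda_g\lambda_{g-1}\prod_{i=1}^n\psi_i^{d_i}\,(2d_i-1)!!\,,
\]
so that Faber's conjecture becomes precisely the assertion that $N_g(d_1,\dots,d_n)$ depends only on $g$, i.e.\ that it is insensitive to how the total degree $g-2+n$ is distributed among the parts $d_i\ge 1$. The degree constraint $\sum_i d_i=g-2+n$ is forced by dimension, since $\lambda_g\lambda_{g-1}$ has degree $2g-1$ and $\dim\Mgn=3g-3+n$; this means we are extracting a fixed-homogeneity (``top'') part of an $n$-variable series, which is exactly the feature that can make the answer constant.

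The starting point for the reduction is the equivalence already established in \cite{GFKLS19}, which translates Faber's conjecture into a (complicated) generating-series statement about the $\lambda_g\lambda_{g-1}$-intersection numbers. I would first simplify this to a cleaner form — the ``curious identity'' of the title — by choosing good auxiliary variables. A guiding observation is that the double factorials appearing in the conjecture have the clean exponential generating function $\sum_{d\ge 0}\frac{(2d-1)!!}{d!}\,z^d=(1-2z)^{-1/2}$, so that products $\prod_i(1-2x_i)^{-1/2}$ and their derivatives are the natural building blocks into which the weights $(2d_i-1)!!$ should be resummed. After substituting the \cite{GFKLS19} formula and resumming, the conjecture becomes the claim that a specific multivariable series $\Omega_g(x_1,\dots,x_n)$ — which a priori depends on the variables in a complicated, non-factorized way — collapses to a prescribed closed form in which the $x_i$ enter only through one symmetric combination, the collapse encoding exactly the composition-independence of $N_g$.

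The heart of the matter, and the step I expect to be the main obstacle, is proving this identity outright rather than merely reducing to it. My approach would be to convert the coefficient extraction defining $\Omega_g$ into a multiple contour integral, using the closed forms above to present the summands as algebraic functions of the $x_i$, and then to exhibit enough structure in the integrand (a factorization, a telescoping of the residues, or a hidden symmetry) that the multiple residue reduces to a single one-variable computation. The composition-independence should then appear as a manifest symmetry of the reduced integral, and evaluating that one-variable residue would simultaneously identify the constant $C_g$, presumably in terms of a Bernoulli number. The delicate points are bookkeeping the homogeneity constraint throughout the residue calculus and verifying that the simplification of the \cite{GFKLS19} identity is genuinely an equivalence and not merely an implication; the low cases $n=1,2$, together with the known value of the one-pointed integral, provide a running consistency check.
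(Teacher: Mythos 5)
Your overall architecture coincides with the paper's (start from the \cite{GFKLS19} equivalence, resum into generating series, finish by residue calculus), but at every decisive point your plan is a placeholder rather than an argument, and the concrete guesses you do make point away from what works. First, the resummation: the identity~\eqref{eq:keyeq} involves the ratio $\frac{(2d-1)!!}{(2d+1-2|I_j|)!!}$ and the binomials $\binom{2a_{[I_j]}+1}{2d}$, and the paper handles these not through $\sum_{d\ge0}\frac{(2d-1)!!}{d!}z^d=(1-2z)^{-1/2}$ but through the operator identity $\frac{(2d-1)!!}{(2d+1-2|I_j|)!!}\,y^{2d+1-2|I_j|}=\bigl(\frac1y\frac{\mathrm{d}}{\mathrm{d}y}\bigr)^{|I_j|-1}y^{2d-1}$ combined with the \emph{polynomial} generating function $\sum_d\binom{2a+1}{2d}y^{2d-1}=\frac{(1+y)^{2a+1}+(1-y)^{2a+1}}{2y}$, and it converts the sum over set partitions into powers of a single polynomial $P$ by working modulo the ideal $(x_1^2,\dots,x_n^2)$, yielding the series $S(v,y)$ of~\eqref{defS} raised to the power $2-2g$. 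Second, your description of the target is off: the reformulated statement (Theorem~\ref{mainThm}) is not that a multivariable series ``collapses to a prescribed closed form in one symmetric combination,'' but that a specific coefficient of a negative power of a series is \emph{linear} in the coefficients $c_a$; the vanishing in~\eqref{eq:keyeq} for $n\ge2$ then follows simply because the coefficient of $x_1\cdots x_n$ of anything linear in the $c_a$'s is zero when $n>1$. Your framing contains no trace of this linearity mechanism, which is what makes the reduction work. Relatedly, your worry about ``equivalence versus implication'' is misdirected: to prove Faber one only needs that the curious identity implies~\eqref{eq:keyeq} and that~\eqref{eq:keyeq} implies Faber; no converse is required at the simplification step.

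The main obstacle, which you correctly identify but then defer to an unspecified ``factorization, telescoping, or hidden symmetry,'' is resolved in the paper by two concrete ideas absent from your plan. Lagrange inversion (the Lemma of Section~\ref{S2}) identifies $T(v,\pm y)=w_\pm+A(v,w_\pm)$, where $w_\pm$ solves $w^2-2A(v,w)=y^2$ near $\pm y$; this replaces the transcendental-looking series $T$ by an algebraic object. Then the substitutions $V=\sqrt v$, $W=(1+w)V$, $\widetilde W=(1+\widetilde w)V$, followed by $(W,\widetilde W)=(r+s,r-s)$, make $S=rs$, $V$ and $Y=v(y^2-1)$ polynomial in $(r,s)$, so that the double residue becomes the single diagonal coefficient $[r^Ns^N](JY/V)$ with $J=rV_r-sV_s$; the difficult terms vanish because they are diagonal coefficients of series antisymmetric under $r\leftrightarrow s$, and the remaining easy term produces $-\binom{2N+2}{N+1}c_N$. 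That antisymmetry is precisely the ``hidden symmetry'' you hope to find, but your proposal neither constructs the change of variables that exposes it nor explains why the residue should localize, so as written the proof has a genuine gap exactly at the step you yourself flag as the heart of the matter.
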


\begin{citedrmk}[\cite{Faber1999}] 
\emph{From the known value of $\int_{\overline{\mathcal{M}}_{g,1}} \lambda_g\lambda_{g-1}\psi_1^{g-1}$
one deduces the value $C_g = \frac{|B_{2g}|}{2^{2g-1}(2g)!}$, where $B_{2g}$ is the $(2g)$th Bernoulli number.}
\end{citedrmk}

This theorem has now been proved in several different ways. Getzler and Pandharipande~\cite{GetPan} derived it from the 
Virasoro constrains for~$\mathbb{P}^2$, later proved by Givental~\cite{Givental2001MMJ}. Liu and Xu~\cite{LiuXu} derived it 
from an identity for the $n$-point functions of the intersection numbers of $\psi$-classes that comes from the KdV equation. 
Goulden, Jackson, and Vakil proved it for $n\leq 3$ using degeneration and localization of Faber--Hurwitz classes~\cite{GouJackVak}. 
Buryak and Shadrin proved it using relations for double ramification cycles~\cite{BuryakShadrin}. Finally, 
Pixton showed the compatibility of this theorem with Faber--Zagier relations in~\cite{PixtonThesis}, also proved by 
Faber and the second author (unpublished, see a remark in~\cite{PPZ16}). Together with a result of~\cite{PPZ16}, 
this shows that the Faber--Zagier relations imply this theorem. 
The proof we will give relies instead on the following equivalence from~\cite{GFKLS19}, which was found via the so-called 
half-spin tautological relations~\cite{KLLS17}:
\begin{citedthm}[\cite{GFKLS19}] 
Faber's intersection number conjecture is equivalent to the following system of combinatorial identities:
For any $g, n\geq 2$ and $a_1,\dots,a_n\in\mathbb{Z}_{\geq 0}$ with $a_1+\cdots+a_n=2g-3+n$, 
\begin{align} \label{eq:keyeq}
	& 0 \= \sum_{k=1}^n \frac{(-1)^k(2g-3+k)!}{k!}
	\sum_{\substack{I_1\sqcup\cdots\sqcup I_k = \llbracket n \rrbracket\\ I_j\neq \emptyset, \forall j\in \llbracket k \rrbracket}}
	\sum_{\substack{d_1,\dots,d_k \in\mathbb{Z}_{\geq 0} \\ d_1+\cdots+d_k=g-2+n}}
	\prod_{j=1}^k \binom{2a_{[I_j]}+1}{2d_j} \frac{(2d_j-1)!!}{(2d_j+1-2|I_j|)!!}\,.
\end{align}
\noindent Here by $a_{[I_j]}$ we denote $\sum_{\ell\in I_j} a_\ell$ and by $|I_j|$ we denote the cardinality of the 
set $I_j\subset \llbracket n \rrbracket$, $j=1,\dots,k$.
\end{citedthm}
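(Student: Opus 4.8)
The engine, exactly as in~\cite{GFKLS19}, is the family of half-spin tautological relations of~\cite{KLLS17}. Write $N_g(d_1,\dots,d_m) \coloneqq \int_{\Mbar_{g,m}}\lambda_g\lambda_{g-1}\prod_{i=1}^m \psi_i^{d_i}$, defined whenever $d_1+\cdots+d_m=g-2+m$, for the intersection numbers that appear in Faber's conjecture. The plan rests on three ingredients. First, capping a suitable half-spin relation with $\lambda_g\lambda_{g-1}$ and integrating over $\Mbar_{g,n}$ yields, for each weight vector $(a_1,\dots,a_n)$ with $a_1+\cdots+a_n=2g-3+n$, an \emph{unconditional} linear relation $L_{(a_i)}\bigl(N_g\bigr)=0$ among the numbers $N_g$ with at most $n$ markings. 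Second, these relations are triangular enough to determine every $N_g(d_1,\dots,d_m)$ recursively from the single one-point value, so their common solution space is one-dimensional. Third, that one-point value fixes the constant as the $C_g$ of the Remark. Granting these, the equivalence is formal: substituting Faber's proportional ansatz $N_g(d_1,\dots,d_m)=C_g\,(2g-3+m)!\prod_i 1/(2d_i-1)!!$ into $L_{(a_i)}$ makes the geometric content cancel and leaves exactly the scalar identity~\eqref{eq:keyeq}; conversely, if~\eqref{eq:keyeq} holds then the ansatz solves every $L_{(a_i)}$, and by one-dimensionality together with the normalization it must coincide with the true $N_g$, which is precisely Faber's conjecture.

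The heart of the forward implication is to see why the substitution produces exactly the right-hand side of~\eqref{eq:keyeq}. After integration, $L_{(a_i)}$ is naturally indexed by the ways in which the $n$ markings are gathered into ``effective'' markings: an ordered set partition $I_1\sqcup\cdots\sqcup I_k=\llbracket n\rrbracket$ records which markings are merged, and the composite of forgetful maps that performs the gathering contributes, by inclusion--exclusion, the M\"obius-type weight $(-1)^k(2g-3+k)!/k!$ together with the reduction to the $k$-point number $N_g(d_1,\dots,d_k)$. Inside a block $I_j$ the iterated string equation, applied to the low-power markings, produces the ratio $(2d_j-1)!!/(2d_j+1-2|I_j|)!!$, which is $1$ for a singleton block, while a Vandermonde/binomial generating-function identity converts the combined weight $a_{[I_j]}=\sum_{\ell\in I_j}a_\ell$ into the $\psi$-degree $d_j$ of the effective marking, yielding $\binom{2a_{[I_j]}+1}{2d_j}$. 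Substituting Faber's value for the $k$-point number cancels its $\prod_j 1/(2d_j-1)!!$ and its $(2g-3+k)!$ against the factors just listed, erasing all geometry and leaving the purely combinatorial sum of~\eqref{eq:keyeq}.

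For the backward implication the key point is the triangularity: ordering multi-indices by the number of markings, the all-singletons term ($k=n$) of $L_{(a_i)}$ isolates the genuine $n$-point numbers, whereas every other term involves strictly fewer markings. Letting $(a_i)$ range over its allowed values, the resulting system has full rank on the $n$-point numbers, so they are determined recursively in $n$ down to the one-point base case; this is what makes the solution space one-dimensional and lets~\eqref{eq:keyeq} propagate into Faber's conjecture. I expect the main obstacle to be twofold and to live entirely in the geometric-to-combinatorial passage: pinning down exactly which half-spin relation to cap with $\lambda_g\lambda_{g-1}$ so that the integrated relation takes the gathered-marking form above, and then proving the full-rank (completeness) statement that guarantees one-dimensionality. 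The most delicate bookkeeping is the mismatch of normalizations---the free weights $a_i$ sum to $2g-3+n$ while the summation degrees $d_j$ sum to $g-2+n$---which is reconciled only through the factor $\binom{2a_{[I_j]}+1}{2d_j}$, together with the careful accounting of the low-$\psi$-power (string and dilaton) contributions within each block.
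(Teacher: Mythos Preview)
The paper does not contain a proof of this statement at all. The equivalence is quoted verbatim as a cited theorem from~\cite{GFKLS19} and used as a black box: the paper's contribution is to prove the combinatorial identity~\eqref{eq:keyeq} directly (via the ``curious identity'' of Theorem~\ref{mainThm} and the Proposition in Section~\ref{S1}), thereby giving a new proof of Faber's conjecture \emph{assuming} the equivalence. So there is no ``paper's own proof'' of this statement to compare against.

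Your proposal is a sketch of the argument that presumably appears in~\cite{GFKLS19} itself, based on half-spin relations from~\cite{KLLS17}, and the broad architecture you outline (cap a tautological relation with $\lambda_g\lambda_{g-1}$, integrate, obtain a triangular linear system in the $N_g$'s, substitute Faber's ansatz) is the right shape for such an equivalence. But as written it is a plan rather than a proof: you explicitly flag the two hard steps---identifying the specific half-spin relation and proving full rank/one-dimensionality---as expected obstacles, and the combinatorial bookkeeping (why the string equation inside a block yields exactly $(2d_j-1)!!/(2d_j+1-2|I_j|)!!$, and why the weight-to-degree conversion gives precisely $\binom{2a_{[I_j]}+1}{2d_j}$) is asserted rather than carried out. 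To actually establish the equivalence you would need to consult~\cite{GFKLS19} for those details; the present paper simply imports the result.
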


Since this theorem is an equivalence and Faber's conjecture is proved, we know that the combinatorial 
identity~\eqref{eq:keyeq}, which was already verified for $2\le n\le5$ in~\cite{GFKLS19}, must hold for all~$n\ge2$. 
Our goal  is to give an independent elementary proof of it. Specifically, we will show in  Section~\ref{S1} 
that~\eqref{eq:keyeq} is a consequence of the following curious identity, whose proof will then be given in Section~\ref{S2}.

\begin{theorem}\label{mainThm}
Let $A(v,y) = v^{-1}\,P(v(1+y)^2)$, where $P(X)=\sum_{a\ge0}c_aX^a$ is a polynomial with
infinitesimal coefficients.  Define a power series $\,T(v,y)$ by
  $$ T(v,y) \coloneqqq y \+
   \sum_{r=1}^\infty\frac1{r!}\,\Bigl(\frac1y\,\frac d{dy}\Bigr)^{r-1}\Bigl(\frac{1+y}y\,A(v,y)^r\Bigr)\,.$$
Then for every positive even integer~$N$ we have
\begin{equation}\label{mainId}
\bigl[v^{N-1}y^{-2}\bigr]\biggl(\frac{T(v,y)-T(v,-y)}2\biggr)^{-N} \= -\,\frac{(2N+1)!}{(N-1)!(N+1)!}\,c_N. 
\end{equation}
\end{theorem}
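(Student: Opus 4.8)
The plan is to begin by recognising the series defining $T$ as a Lagrange reversion expansion. Substituting $u=y^2/2$, so that the operator $\frac1y\frac{d}{dy}$ becomes $\frac{d}{du}$, the defining formula reads
\[
T=y+\sum_{r\ge1}\frac1{r!}\,\frac{d^{r-1}}{du^{r-1}}\!\Bigl(\tfrac{1+y}{y}\,A(v,y)^r\Bigr),
\]
which is exactly the Lagrange–Bürmann expansion of $F(\zeta)-F(u)$ for the reverted series $\zeta=u+A$, with $F$ determined by $F'(u)=\tfrac{1+y}{y}$, i.e.\ $F=y+u$. Because $A$ has positive order in $\mathbf c=(c_0,c_1,\dots)$ the reversion converges $\mathbf c$-adically, and summing it gives the closed form $T(v,y)=\eta+A(v,\eta)$, where $\eta=\eta(v,y)=y+O(\mathbf c)$ is the unique formal solution of $\eta^2-2A(v,\eta)=y^2$. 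Replacing $y$ by $-y$ leaves this equation unchanged but selects the conjugate branch, so that
\[
\Phi:=\frac{T(v,y)-T(v,-y)}2=\frac{(1+\eta_+)^2-(1+\eta_-)^2}4,
\]
where $\eta_\pm=\pm y+O(\mathbf c)$ are the two branches of $\eta^2-2A(v,\eta)=y^2$.

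Next I would rewrite the two coefficient extractions as formal residues,
\[
\bigl[v^{N-1}y^{-2}\bigr]\Phi^{-N}=\operatorname*{Res}_{v=0}\operatorname*{Res}_{y=0}\bigl(v^{-N}\,y\,\Phi^{-N}\bigr),
\]
and change the inner variable from $y$ to $\eta=\eta_+$. Differentiating $\eta^2-2A(v,\eta)=y^2$ yields $y\,dy=(\eta-\partial_\eta A)\,d\eta$, turning the $y$-residue into
\[
\operatorname*{Res}_{\eta}\Bigl[(\eta-\partial_\eta A(v,\eta))\,\Bigl(\tfrac{(1+\eta)^2-(1+\eta_-)^2}4\Bigr)^{-N}\Bigr],
\]
where now $\eta_-=\eta_-(v,\eta)$ is the conjugate root of $\xi^2-2A(v,\xi)=\eta^2-2A(v,\eta)$. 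Some care is needed here: although the substitution is of the form $\eta=y\cdot(1+O(\mathbf c))$, its $\mathbf c$-corrections involve negative powers of $y$, so the residue identities have to be applied $\mathbf c$-adically, order by order.

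The crux, and the step I expect to be the real obstacle, is the conjugate branch $\eta_-$, which couples the two sheets of the curve $\eta^2-2A(v,\eta)=y^2$ and obstructs a one-line Lagrange inversion. I would exploit two structural features. First, the grading by $v$: since $A(v,\eta)=\sum_{a\ge0}c_a v^{a-1}(1+\eta)^{2a}$, every monomial $c_{a_1}\cdots c_{a_k}$ carries exactly the power $v^{\,a_1+\cdots+a_k-k}$, so extracting $v^{N-1}$ confines attention to $a_1+\cdots+a_k=N-1+k$; the assertion then becomes that the $k=1$ term $c_N$ survives with the stated coefficient while every $k\ge2$ contribution cancels. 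Second, the involution $\eta_+\leftrightarrow\eta_-$ (equivalently $y\mapsto-y$), under which $y\,\Phi^{-N}\,dy$ is invariant; symmetrising the $\eta$-residue over this involution — equivalently, summing the residues over both sheets lying above a given value of $y^2$ — should eliminate the explicit $\eta_-$-dependence and leave a residue evaluable in closed form.

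To fix the constant I would finally isolate the part linear in $\mathbf c$. There $\eta_\pm=\pm y+A(v,\pm y)/(\pm y)+O(\mathbf c^2)$ and
\[
\Phi=y+\frac1{2y}\bigl[(1+y)A(v,y)+(1-y)A(v,-y)\bigr]+O(\mathbf c^2).
\]
Expanding $\Phi^{-N}=y^{-N}-N\,y^{-N-1}(\Phi-y)+\cdots$ and reading off $[v^{N-1}y^{-2}]$ forces $a=N$ and selects the coefficient of $y^{N}$ in $(1\pm y)^{2N+1}$, the two summands contributing equally; this gives $-N\binom{2N+1}{N}=-\tfrac{(2N+1)!}{(N-1)!(N+1)!}$, exactly the right-hand side of \eqref{mainId}. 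The substance of the theorem is thus not this first-order match but the vanishing of all higher-order ($k\ge2$) contributions, and I expect the main work to lie precisely in establishing that cancellation through the sheet-symmetrised residue identity of the previous paragraph.
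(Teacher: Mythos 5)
Your opening reductions are sound and in fact coincide with the paper's own first step: the Lagrange--B\"urmann summation giving $T(v,y)=\eta+A(v,\eta)$ with $\eta^2-2A(v,\eta)=y^2$ is exactly the Lemma of Section~\ref{S2} (proved there by a residue computation), and your degree-one extraction, yielding $-N\binom{2N+1}{N}c_N=-\frac{(2N+1)!}{(N-1)!(N+1)!}c_N$, is correct. But what you have is not a proof, as you yourself acknowledge: the entire content of the theorem is the vanishing of the contributions of degree $\ge2$ in the $c_a$'s, and for this you offer only the expectation that symmetrising the $\eta$-residue over the involution $\eta_+\leftrightarrow\eta_-$ ``should eliminate the explicit $\eta_-$-dependence.'' That is the genuine gap, and the proposed mechanism cannot fill it: the sheet involution is just $y\mapsto-y$, a symmetry that is already manifest in the problem --- you used it in defining $\Phi$, and it makes $y\,\Phi^{-N}\,dy$ even in $y$ from the outset --- so each graded piece of the integrand (in your $v$-filtration, degree by degree in $\mathbf c$) is individually invariant under it, and averaging over the two sheets produces no cancellation whatsoever beyond what you already have.

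The cancellation in the paper comes from a \emph{different}, hidden involution, visible only after an exact algebraic uniformisation, and this is the idea your outline is missing. One first flips the negative power into a positive one, $[y^{-2}]S^{-N}=\frac N2\,[S^N]\,y^2$ with $S=v\,(T(v,y)-T(v,-y))/2$, so the claim becomes $[v^0S^N](Y)=-\binom{2N+2}{N+1}c_N$ with $Y=v(y^2-1)$; this step matters because it is what later lets $S$ serve as a coordinate (your route keeps $\Phi^{-N}$ and must fight the conjugate branch inside a negative power). Then, setting $V=\sqrt v$, $W=(1+w_+)V$, $\widetilde W=(1+w_-)V$ (your $\eta_\pm$) and $Q(X)=X^2-2P(X^2)$, one has the exact identities
\begin{equation*}
S=\frac{W^2-\widetilde W^2}{4}\,,\qquad V=\frac12\,\frac{Q(W)-Q(\widetilde W)}{W-\widetilde W}\,,\qquad Y=\frac{W\,Q(\widetilde W)-\widetilde W\,Q(W)}{W-\widetilde W}\,,
\end{equation*}
and in the coordinates $(W,\widetilde W)=(r+s,r-s)$ one gets $S=rs$ \emph{on the nose}, so the double extraction becomes a diagonal coefficient, $[V^0S^N](Y)=[r^Ns^N](JY/V)$ with Jacobian $J=rV_r-sV_s$. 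In this picture $V$, $Y$ and $Q^\pm=Q(r\pm s)$ are polynomials \emph{linear} in the $c_a$'s; the pieces of $JY/V$ that are linear in the $c_a$'s are elementary binomial-coefficient extractions summing to $-\binom{2N+2}{N+1}c_N$, while all the nonlinearity is confined to an expression antisymmetric under $r\leftrightarrow s$, whose diagonal coefficients vanish identically. Note that the decisive involution $r\leftrightarrow s$ is $\widetilde W\mapsto-\widetilde W$, \emph{not} the sheet swap $W\leftrightarrow\widetilde W$ (which is $s\mapsto-s$, under which the integrand is merely even): it mixes your two branches in a way that no symmetrisation over $\eta_+\leftrightarrow\eta_-$ can reproduce. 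Without this uniformisation and the diagonal-coefficient trick, your argument stops exactly where the theorem begins.
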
  
We make several remarks about this statement.  First of all, when we say that the $c_a$'s are infinitesimal, we simply mean all expressions being considered, such as $T(v,y)$, belong to the formal power series ring $\Q[v^{\pm1},y^{\pm1}][[c_0,c_1,\dots]]$ and there are therefore no convergence issues.
Secondly, the restriction to $N$ positive
and even is harmless since the left-hand side of~\eqref{mainId} vanishes trivially for $N=0$ or $N$ odd. Finally,
equation~\eqref{mainId} is slightly stronger than what we need to prove~\eqref{eq:keyeq}, for which it would
suffice to know that the coefficient on the left is linear in the coefficients~$c_a$, i.e., that it has no
terms of total degree $\geq 2$ in the $c_a$'s. 

\section{Reduction to a curious identity}\label{S1}

Theorem~\ref{mainThm} will be proved in \S\ref{S2}. In this section we will show how it implies the identity~\eqref{eq:keyeq}
(and hence Faber's conjecture).  To do this, we will rewrite the right-hand side of~\eqref{eq:keyeq} in 
terms of a simpler expression defined using generating functions. 

\begin{proposition*}\label{reduction} 
Let $P(X) = P(x_1,\dots,x_n;X)=\sum_{\ell=1}^n x_\ell\,X^{a_{\ell}}$ and define $S(v,y)=S(x_1,\dots,x_n;v,y)$ by
\begin{equation}\label{defS}
S(v,y) \= y\+\sum_{r=1}^n \frac{1}{r!} \Bigl(\frac{1}{y}\frac{\mathrm{d}}{\mathrm{d} y}\Bigr)^{r-1}
       \biggl(\frac{(1+y)P(v(1+y)^2)^r+(1-y)P(v(1-y)^2)^r}{2y}\biggr)\,.
\end{equation}
Then the right-hand side of~\eqref{eq:keyeq} is equal to $(2g-3)!\,[x_1\cdots x_n\,v^{2g+n-3}\,y^{-2}]\bigl(S(v,y)^{2-2g}\bigr)$.
\end{proposition*}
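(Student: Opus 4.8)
The plan is to show that the right-hand side of the combinatorial identity \eqref{eq:keyeq} is, after suitable generating-function bookkeeping, a single coefficient extraction from $S(v,y)^{2-2g}$. First I would encode the data $a_1,\dots,a_n$ via the polynomial $P(X)=\sum_\ell x_\ell X^{a_\ell}$, so that the exponents $a_\ell$ become the degrees in $X$ and the formal variables $x_\ell$ serve as markers that let me recover individual terms by extracting the coefficient of $x_1\cdots x_n$ at the end. The key observation is that the inner double sum over set partitions $I_1\sqcup\cdots\sqcup I_k$ and the degrees $d_1,\dots,d_k$ factorizes: each block $I_j$ contributes a factor depending only on $a_{[I_j]}=\sum_{\ell\in I_j}a_\ell$ and $|I_j|$, namely $\sum_{d\ge0}\binom{2a_{[I_j]}+1}{2d}\frac{(2d-1)!!}{(2d+1-2|I_j|)!!}$. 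This block factor is exactly what should be produced by the $r$-th term in \eqref{defS} with $r=|I_j|$, once we replace $P$ by its $r$-th power and track the $X$-exponent against $a_{[I_j]}$.

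The main computation is therefore to verify that the operator $\bigl(\tfrac1y\tfrac{d}{dy}\bigr)^{r-1}$ applied to $\tfrac{1+y}{2y}P(v(1+y)^2)^r$ (plus the mirror term with $y\mapsto-y$) produces precisely the block generating function. Concretely, I would expand $P(v(1+y)^2)^r=\sum x_{\ell_1}\cdots x_{\ell_r}\,v^{a}(1+y)^{2a}$ where $a=a_{\ell_1}+\cdots+a_{\ell_r}$, and then compute $\bigl(\tfrac1y\tfrac{d}{dy}\bigr)^{r-1}\tfrac{1+y}{2y}(1+y)^{2a}$ symmetrized in $y\mapsto-y$. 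The role of the symmetrization (the $\tfrac12$ of the two terms) is to select even powers of $y$ appropriately, and I expect the combinatorial identity
$$\Bigl(\frac1y\frac{d}{dy}\Bigr)^{r-1}\frac{(1+y)^{2a+1}}{y}\Big|_{\mathrm{even}}
=\sum_{d}\binom{2a+1}{2d}\frac{(2d-1)!!}{(2d+1-2r)!!}\,y^{2d-2r}$$
to emerge after a direct but careful calculation, where the double factorial with possibly negative argument encodes the action of $\tfrac1y\tfrac{d}{dy}$ lowering the power of $y$. Matching the power of $v$ forces $a=a_{[I_j]}$ and matching $r=|I_j|$ then reproduces exactly the summand of \eqref{eq:keyeq}.

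Having established that $S(v,y)$ is the generating series whose $r$-th term sums the contribution of a single block of size $r$, the final step is the exponential/logarithmic passage from the block decomposition to the alternating-sign sum over $k$ in \eqref{eq:keyeq}. The factors $\tfrac{(-1)^k(2g-3+k)!}{k!}$ are precisely the coefficients in the expansion of $(2g-3)!\,(\,\cdot\,)^{2-2g}$ around the leading term $y$ of $S$: writing $S(v,y)=y\bigl(1+(\text{higher order in }P)\bigr)$ and expanding $S^{2-2g}$ by the binomial/multinomial series, the $k$-th term collects contributions from exactly $k$ blocks, with the numerical coefficient $\binom{2-2g}{k}$ up to rescaling giving $\tfrac{(-1)^k(2g-3+k)!}{(2g-3)!\,k!}$. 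Extracting $[x_1\cdots x_n]$ forces the $k$ chosen blocks to partition all of $\llbracket n\rrbracket$ with no repeated markers, reproducing the sum over ordered set partitions (the $\tfrac1{k!}$ accounting for the unordering), and extracting $[v^{2g+n-3}y^{-2}]$ pins down the total degree and the single surviving $y$-power. The main obstacle I anticipate is the double-factorial identity in the middle step: getting the normalization of $\bigl(\tfrac1y\tfrac{d}{dy}\bigr)^{r-1}$ and the symmetrization exactly right so that the denominators $(2d_j+1-2|I_j|)!!$ appear with the correct sign conventions for small $d_j$, since these are the terms where the naive power-series manipulation is most delicate.
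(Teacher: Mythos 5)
Your proposal is correct and follows essentially the same route as the paper's proof: encode the $a_\ell$ via $P(X)=\sum_\ell x_\ell X^{a_\ell}$ with markers $x_1,\dots,x_n$ and $v$, use the double-factorial identity $\bigl(\tfrac1y\tfrac{d}{dy}\bigr)^{r-1}y^{2d-1}=\tfrac{(2d-1)!!}{(2d+1-2r)!!}\,y^{2d+1-2r}$ together with the $y\mapsto-y$ symmetrization to produce the block factors, and resum over $k$ via $\binom{2-2g}{k}=\tfrac{(-1)^k(2g-3+k)!}{(2g-3)!\,k!}$ to obtain $S^{2-2g}$, with $[x_1\cdots x_n]$ enforcing the set-partition structure exactly as the paper does by working modulo the ideal $(x_1^2,\dots,x_n^2)$. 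Two typo-level slips to fix: the operator applied to $y^{2d-1}$ yields $y^{2d+1-2r}$, not $y^{2d-2r}$, and no extra $1/k!$ ``unordering'' factor is needed, since the sum in~\eqref{eq:keyeq} is over \emph{ordered} tuples $(I_1,\dots,I_k)$ and the $1/k!$ is already absorbed into the binomial coefficient $\binom{2-2g}{k}$.
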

\begin{proof} 
We first note that dividing by $(2g-3)!$ replaces the prefactor $\frac{(-1)^k(2g-3+k)!}{k!}$ in~\eqref{eq:keyeq} by 
the simpler binomial coefficient $\binom{2-2g}k$, and also that the sum from $k=1$ to~$n$ can be replaced by a sum over 
all~$k\ge0$ since a decomposition $I_1\sqcup\cdots\sqcup I_k = \llbracket n \rrbracket$ with all $I_j$ non-empty can 
only exist if $1\le k\le n$.  Then we introduce a formal variable~$y$ and use the equality
$$
\frac{(2d_j-1)!!}{(2d_j+1-2|I_j|)!!}\,y^{2d_j+1-2|I_j|} 
  \= \Big(\frac{1}{y}\frac{\mathrm{d}}{\mathrm{d} y}\Big)^{|I_j|-1}y^{2d_j-1} 
$$
to write the inner sum in~\eqref{eq:keyeq} for given $a_\ell$ and $I_j$ as the coefficient of $y^{2g-4+k}$ in
$$ \prod_{j=1}^k \biggl[\Big(\frac1y\frac{\mathrm{d}}{\mathrm{d} y}\Big)^{|I_j|-1}
\sum_{d=0}^\infty \binom{2a_{[I_j]}+1}{2d} y^{2d-1}\biggr]
\=\prod_{j=1}^k \biggl[\Big(\frac1y\frac{\mathrm{d}}{\mathrm{d} y}\Big)^{|I_j|-1}
\frac{(1+y)^{2a_{[I_j]}+1}+(1-y)^{2a_{[I_j]}+1}}{2y}\biggr]\,.
$$
(Extracting the coefficient of $y^{2g-4+k}$ corresponds to the condition $d_1+\cdots+d_k=g-2+n$.)  We now
introduce $n+1$ further formal variables $x_1,\dots,x_n$ and~$v$ and use the identity
$$ \sum_{\substack{I_1\sqcup\cdots\sqcup I_k=\llbracket n\rrbracket \\ I_1,\dots,I_k\neq\emptyset}}F(I_1)\cdots F(I_k)
\= [x_1\cdots x_n]\Biggl(\,\sum_{\emptyset\ne I\subseteq\llbracket n\rrbracket}F(I)\,x_{\{I\}}\Biggr)^k\,, $$
where $x_{\{I\}}$ stands for $\prod_{\ell\in I} x_{\ell}$, which is valid for any function $F$ on the power set of $\llbracket n\rrbracket$, to write the quotient 
 of~\eqref{eq:keyeq} by $(2g-3)!$ as the coefficient of $x_1\cdots x_n\,y^{2g-4}\,v^{2g+n-3}$ in
\begin{align*} &\sum_{k=0}^\infty\binom{2-2g}k\,\biggl(\frac1y\sum_{\emptyset \neq I\subset \llbracket n \rrbracket} 
  x_{\{I\}}\,v^{a_{[I]}}\,\Big(\frac{1}{y}\frac{\mathrm{d}}{\mathrm{d} y}\Big)^{|I|-1}
       \Bigl(\frac{(1+y)^{2a_{[I]}+1}+(1-y)^{2a_{[I]}+1}}{2y}\Bigr)\biggr)^k \\
  &\qquad \= \biggl(1 \+ \frac1y\,\sum_{\emptyset \neq I\subset \llbracket n \rrbracket} 
  x_{\{I\}}\,v^{a_{[I]}}\,\Big(\frac{1}{y}\frac{\mathrm{d}}{\mathrm{d} y}\Big)^{|I|-1}
       \biggl(\frac{(1+y)^{2a_{[I]}+1}+(1-y)^{2a_{[I]}+1}}{2y}\biggr)\biggr)^{2-2g}\,.
\end{align*}  
(Here extracting the coefficient of $v^{2g-3+n}$ corresponds to the condition $a_1+\cdots+a_n=2g-3+n$.)~The 
proposition then follows by noting that the coefficient of $x_1\cdots x_n$ in a polynomial or power series
depends only on its congruence class of modulo the ideal generated by $x_\ell^{\,2}$ ($\ell=1,\dots,n$) and that,
if we denote this equivalence relation by~$\equiv$, we have
$$ \sum_{\substack{I\subseteq\llbracket n\rrbracket \\ |I|=r}} x_{\{I\}}\,v^{a_{[I]}}\,(1\pm y)^{2a_{[I]}}
\;\equiv\;\frac1{r!}\,P(v(1\pm y)^2)^r\,, $$
for each $0\le r\le n$, since each term $\prod_{\ell\in I}x_\ell\,v^{a_\ell}(1\pm y)^{2a_\ell}$ appears 
$r!$~times in $P\bigl(v(1\pm y)^2\bigr)^r$. The last identity can also be justified by observing that the LHS is 
the coefficient of $t^r$ in $\prod_{\ell=1}^n\bigl(1+tx_\ell v^{a_\ell}(1\pm y)^{2a_\ell}\bigr)$, which is congruent to
$\exp\bigl(tP(v(1\pm y)^2)\bigr)$.
\end{proof}

\smallskip
The identity~\eqref{eq:keyeq} for $g,\,n\ge2$ follows immediately by combining Theorem~\ref{mainThm} (with $N=2g-2$) and the proposition (with
the same~$P$, so with $c_a$ equal to $\sum_{a_\ell=a}x_\ell$), since if we
rescale $x_1,\,\dots,\,x_n$ in~\eqref{defS} by dividing them by~$v$ then the expression whose vanishing we have 
to prove is just the coefficient of $x_1\cdots x_n$ in the left-hand side of~\eqref{mainId}, which vanishes for
$n>1$ because $c_N$ is linear in the~$x$'s. 

\section{Proof of the curious identity}\label{S2}

In this section we prove Theorem~\ref{mainThm}.  The first step is to give a different expression for the power
series $T(v,y)$ appearing there.  This is done in the following lemma, in which there is no parameter~$v$.
\begin{lemma*}\label{Lagrange}
Let $A(y)$ be a polynomial with infinitesimal coefficients.  Then 
\begin{equation*} \label{Tdef}
 y\+\sum_{r\geq 1} \frac1{r!}\Big(\frac1y\frac{\mathrm{d}}{\mathrm{d} y}\Big)^{r-1}\Big(\frac{1+y}{y}A(y)^r\Big) 
   \=w\+A(w)\,,
\end{equation*}
where $w$ is the solution near y=w of $w^2=y^2+2A(w)$.
\end{lemma*}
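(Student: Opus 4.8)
The plan is to recognize the left-hand side as the output of the classical Lagrange reversion (Lagrange--B\"urmann) formula, after a change of variable that simultaneously linearizes the operator $\frac1y\frac{\mathrm{d}}{\mathrm{d} y}$ and puts the defining relation $w^2=y^2+2A(w)$ into standard form. Concretely, I would set $s=y^2/2$ and $\sigma=w^2/2$. Since $\frac{\mathrm{d} s}{\mathrm{d} y}=y$, this gives the operator identity $\frac1y\frac{\mathrm{d}}{\mathrm{d} y}=\frac{\mathrm{d}}{\mathrm{d} s}$, and, writing $B(u):=A(\sqrt{2u})$ so that $A(y)=B(s)$ and $A(w)=B(\sigma)$, the equation $w^2=y^2+2A(w)$ becomes exactly $\sigma=s+B(\sigma)$. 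Because $A$ has infinitesimal coefficients, so does $B$, and this equation has a unique solution $\sigma=\sigma(s)$ with $\sigma\equiv s$ modulo the ideal generated by the $c_a$; this is precisely the branch with $w$ near $y$.

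Next I would invoke the Lagrange reversion theorem in the form: if $\sigma=s+B(\sigma)$, then for any $H$,
$$H(\sigma) \= H(s) \+ \sum_{r\ge1}\frac{1}{r!}\,\frac{\mathrm{d}^{\,r-1}}{\mathrm{d} s^{\,r-1}}\bigl(H'(s)\,B(s)^r\bigr),$$
the series converging in the $(c_0,c_1,\dots)$-adic topology since each $c_a$ enters $B$ to positive degree, so only finitely many $r$ contribute to any fixed monomial in the $c_a$. The decisive choice is to take $H$ with $H'(s)=\frac{1+y}{y}$; as $\frac{\mathrm{d}}{\mathrm{d} s}(s+\sqrt{2s})=1+\frac1{\sqrt{2s}}=1+\frac1y$, this is realized by $H(s)=s+\sqrt{2s}$, whence $H(\sigma)=\sigma+w$ and $H(s)=s+y$. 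With this $H$ the summand $H'(s)B(s)^r$ is exactly $\frac{1+y}{y}A(y)^r$ and the iterated derivative $\frac{\mathrm{d}^{r-1}}{\mathrm{d} s^{r-1}}$ is exactly $\bigl(\frac1y\frac{\mathrm{d}}{\mathrm{d} y}\bigr)^{r-1}$, so the right-hand side of the reversion formula coincides term by term with the sum $\Sigma$ appearing in the lemma.

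It then remains only to read off the identity. The reversion formula yields $\Sigma=H(\sigma)-H(s)=(w-y)+(\sigma-s)$, and since $\sigma-s=B(\sigma)=A(w)$ by the defining equation, adding back the leading $y$ gives $y+\Sigma=w+A(w)$, which is the claim.

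The main obstacle I anticipate is not the algebra but the bookkeeping around the square root: $H(s)=s+\sqrt{2s}$ is not a power series in $s$, so the reversion theorem in its textbook (power-series) form does not apply verbatim. I would handle this by working throughout with Laurent series in $y$ rather than in $s$, treating the half-integer powers of $s$ as genuine functions of $y$ (they enter only through $H$ and $H'$, in the combinations $s+\sqrt{2s}$ and $1+1/\sqrt{2s}=1+1/y$), and verifying the reversion identity as a formal equality in $\Q[y^{\pm1}][[c_0,c_1,\dots]]$. I would also need to fix the branch $w=+\sqrt{2\sigma}$, $y=+\sqrt{2s}$ consistent with $w\equiv y$, and state precisely the ring in which both sides live so that the $(c_a)$-adic convergence of the reversion series is unambiguous; once these formal points are pinned down, the identity follows immediately.
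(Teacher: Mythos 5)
Your proof is correct, and it takes precisely the route that the paper mentions but deliberately bypasses: the authors remark that the lemma ``is in principle just an application of Lagrange's inversion theorem, but we give a proof via a residue calculation'', and your argument is that in-principle route made explicit. Your substitution $s=y^2/2$, $\sigma=w^2/2$ linearizes $\frac1y\frac{\mathrm{d}}{\mathrm{d}y}$ into $\frac{\mathrm{d}}{\mathrm{d}s}$ and turns $w^2=y^2+2A(w)$ into the standard fixed-point equation $\sigma=s+B(\sigma)$, after which the choice $H(s)=s+\sqrt{2s}$ (so that $H'(s)=(1+y)/y$ and $H(\sigma)=\sigma+w$ on the branch $w\equiv y$) makes the classical reversion series match the lemma's sum term by term, and the endgame $\sigma-s=B(\sigma)=A(w)$ is right. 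The paper instead makes ``infinitesimal coefficients'' concrete by substituting $A=xF$ with $F$ holomorphic and computes $[x^r](T)$ for $T=w+xF(w)=w+\frac{w^2-y^2}{2}$ by a residue calculation in $x$, passing to the local parameter $z=xF(w)=\frac{w^2-y^2}{2}$ near $w=y$; note that the paper's observation $\frac{\mathrm{d}}{\mathrm{d}z}=\frac1y\frac{\mathrm{d}}{\mathrm{d}y}$ is exactly your linearization appearing inside the residue, so the two arguments are two faces of the same computation. What each buys: yours identifies the lemma conceptually as literal Lagrange reversion and needs no contour integration, at the cost of the branch and half-integer-power bookkeeping you correctly flag at the end --- the reversion theorem must be justified for the non-power-series $H(s)=s+\sqrt{2s}$, which one can do formally by proving it for $H(s)=s^{\alpha}$ with $\alpha\in\frac12\mathbb{Z}$ and extending by linearity, everything living in $\mathbb{Q}[y^{\pm1}][[c_0,c_1,\dots]]$ where $c_a$-adic convergence holds because $B(\sigma)^r$ has degree at least $r$ in the $c_a$; the paper's residue proof is self-contained in three lines and never needs a version of the reversion theorem valid outside power series, since the change of variables inside the residue silently performs the same reduction.
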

\begin{proof} This is in principle just an application of Lagrange's inversion theorem, but we give a proof via a residue 
calculation. 
As with Theorem~\ref{mainThm}, ``polynomial with infinitesimal coefficients" means that all expressions being considered are to be interpreted  as formal power series (with coefficients in the ring of Laurent polynomials in $y$) in the coefficients of $A$. The easiest way to keep track of everything is to replace $A(y)$ by $xF(y)$, where 
$F$ is a polynomial, so that the powers of $x$ keep track of the degree of the terms with respect to the coefficients of $A$.
Then setting $T=w+xF(w)=w+\frac{w^2-y^2}2$  and using residue calculus (with
$y$~fixed and $x$~variable), we find:
$$ [x^r](T) \= \Res{x=0}\Bigl(\frac{w+w^2/2}{x^{r+1}}\,\mathrm{d}x\Bigr)  
\= \frac1r\,\Res{w=y}\Bigl(\frac{1+w}w\,F(w)^r\,\frac{\mathrm{d} z}{z^r}\Bigr)
\= \frac1{r!}\Big(\frac1y\frac{\mathrm{d}}{\mathrm{d} y}\Big)^{r-1}\Big(\frac{1+y}{y}F(y)^r\Big)
$$
for $r>0$, where we have used the local parameter $z=xF(w)=\frac{w^2-y^2}2$,  $\frac {\mathrm{d}}{\mathrm{d}z}=\frac1y\,\frac {\mathrm{d}}{\mathrm{d}y}$, near~$w=y$.
\end{proof}
 
\begin{proof}[Proof of Theorem~\ref{mainThm}]
Applying the lemma to $A(v,y)$, we find that $T(v,\pm y)=w_\pm +A(v,w_\pm)$, where $w_\pm$ is 
the solution of $w^2-2A(v,w)=y^2$ near to~$\pm y$. Our goal is to show that $[v^{-1}y^{-2}]S^{-N}=-\frac{(2N+1)!}{(N-1)!(N+1)!}c_N$
for $N>0$ even, where $S\coloneqq v\frac{T(v,y)-T(v,-y)}2$ and $A(v,y)=\sum_a c_av^{a-1}(1+y)^{2a}$.  The first
step is to note that, again by residue calculus, for fixed~$v$ we have
$$
[y^{-2}]\,S^{-N} =\Res{y=0}\,\, \frac{y\, \mathrm{d}y}{S^N} = \Res{y=0}\,\, \frac{\mathrm d(y^2/2)}{S^N}
  =-\frac12 \,\Res{S=0}\,\, y^2\, \mathrm{d}(S^{-N}) =\frac N2\,\Res{S=0}\,\, \frac{y^2\,\mathrm d S}{S^{N+1}}=\frac{N}{2}[S^N]\,y^2\,.
$$
Hence the identity to be proved can also be written as $[v^0S^N](Y) = -\binom{2N+2}{N+1}c_N$ for $N>0$ even, where~$Y=(y^2-1)v$.
(As already mentioned in the introduction, this coefficient is trivially~0 if $N$ is zero or odd.)
We define new variables $V,\,W,\,\W$ by $V=\sqrt v$, $W=(1+w_+)V$, $\W=(1+w_-)V$.  
Then $S$,~$V$ and~$Y$ all become polynomials in $W$ and~$\W$, namely 
  $$ S\=\frac{W^2-\W^2}4\,, \qquad V=\frac12 \frac{Q(W)-Q(\W)}{W-\W},
  \qquad Y \= \frac{WQ(\W)-\W Q(W)}{W-\W}\,, $$
where $Q(X)=X^2-2P(X^2)$. Now change variables again by $(W,\W)=(r+s,r-s)$ and set $Q^\pm=Q(r\pm s)$. 
Then a simple computation shows that
  $$ S\=rs, \;\quad V \= \frac{Q^+-Q^-}{4s}\,, \;\quad Y \= \frac{Q^++Q^-}{2}\,-\,2rV $$ 
and the quantity that we want to compute is 
  $$ [V^0S^N](Y) \= \Res{V=0}\;\Res{S=0}\,\Bigl(Y\,\frac{dV}V\,\frac{dS}{S^{N+1}}\Bigr) \= \bigl[r^Ns^N\bigr]\Bigl(\frac{JY}V\Bigr)\,, $$
where $J = rV_r-sV_s$, with $V_r=\p V/\p r$ and $V_s=\p V/\p s$, is the Jacobian of the transformation $(r,s)\mapsto(V,S)$.  We have
$$ \frac{JY}{V} \= 2r\,\bigl(sV_s-rV_r\bigr) \+ \frac{rV_r-sV_s}{2V}\,\bigl(Q^++Q^-\bigr) \= 2r\,\bigl(sV_s-rV_r\bigr) \+\Bigl(\frac12 \+\frac{r(sV)_r-s(sV)_s}{2sV}\Bigr)(Q^++Q^-)\,.  
$$ 
The coefficient of $r^Ns^N$ in the first two terms is easily computed in closed form and is given by
  $$\bigl[r^Ns^N\bigr]\biggl(2r\,\bigl(sV_s-rV_r\bigr) + \frac{Q^++Q^-}{2}\biggr) \= -2c_N\biggl(\binom{2N}{N-1}+\binom{2N}{N}\biggr) \=-\,\binom{2N+2}{N+1}c_N\,,  $$  
and the two numbers $[r^Ns^N]\bigl(\frac{r(sV)_r-s(sV)_s}{sV}\,Q^\pm\bigr)$ both vanish because they are diagonal
coefficients of power series in $r$ and~$s$ that are antisymmetric under interchange of the two variables.
(To see that $(r(sV)_r-s(sV)_s)/(sV)$ is a power series in $r$ and~$s$, note that $Q$ is an even polynomial with non-vanishing 
quadratic term, so $sV$ is $rs$ times a polynomial with non-vanishing constant term and  $r(sV)_r-s(sV)_s$ 
is divisible by~$rs$.)
\end{proof}

\section{Another curious identity}


In the course of finding and proving Theorem~\ref{mainThm} we empirically discovered the following result, 
which seems interesting enough to be worth stating, even though we don't know of any applications, since it may 
indicate that there are much more general identities of this sort. 
\begin{theorem}\label{conj2} Let all notations be as in Theorem~\ref{mainThm}. Then for all $N\geq 1$ one has
\begin{equation}\label{eq2}
\bigl[v^{N-1}y^{-2}\bigr] \biggl(\frac{T(v,y)+y}{2}\biggr)^{-N}\= -\,\frac{N}{4}\,\binom{2N+2}{N+1} c_{N}\,.
\end{equation}
\end{theorem}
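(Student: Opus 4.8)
The plan is to mimic the proof of Theorem~\ref{mainThm}, replacing the branch $T(v,-y)$ by the value $-y$ that it takes when $P=0$. Applying the Lemma, I would write $T(v,y)=w+A(v,w)$ where $w$ solves $w^2-2A(v,w)=y^2$ near $y$, and put $V=\sqrt v$, $W=(1+w)V$, $\widehat W=(1-y)V$, and (abusing notation) $S\coloneqq v\,\frac{T(v,y)+y}{2}$, the analogue of the quantity $S$ of \S\ref{S2}. A short computation gives $S=\frac{W^2-\widehat W^2}{4}$, exactly the shape obtained in \S\ref{S2} but with $\widetilde W$ replaced by $\widehat W$. The structural point is that, whereas $W$ obeys $Q(W)=Y+2VW$ with $Q(X)=X^2-2P(X^2)$ and $Y=v(y^2-1)$, the new variable obeys the same relation with $Q$ replaced by its potential-free part $X^2$, i.e.\ $\widehat W^2=Y+2V\widehat W$; this is the algebraic trace of the fact that $-y$ is the $P=0$ solution of $w^2-2A(v,w)=y^2$.

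Since $\frac{T(v,y)+y}{2}=S/v$, the left side of \eqref{eq2} is $[v^{-1}y^{-2}]S^{-N}$, and because $S$ vanishes to first order at $y=0$ (leading term $vy$) the residue manipulation of \S\ref{S2} applies verbatim and gives $[y^{-2}]S^{-N}=\frac{N}{2}[S^N]\,y^2$ for fixed $v$. Using $y^2=1+Y/v$ this reduces the theorem to the claim $[v^0S^N]\,Y=-\frac{1}{2}\binom{2N+2}{N+1}c_N$, the factor $\frac{1}{2}$ reflecting that the right side of \eqref{eq2} is half that of \eqref{mainId}. Setting $(W,\widehat W)=(r+s,r-s)$ then gives $S=rs$ and, writing $f=f(r+s)$ with $f(u)=P(u^2)$,
\[
 V=\frac{Q(r+s)-(r-s)^2}{4s}=r-\frac{f}{2s},\qquad Y=s^2-r^2-\frac{(s-r)f}{s},
\]
so that with $J=rV_r-sV_s$ (the Jacobian of $(r,s)\mapsto(V,S)$) the quantity to evaluate becomes $[v^0S^N]\,Y=[r^Ns^N]\bigl(JY/V\bigr)$.

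The heart of the matter is an exact simplification of $JY/V$. Since $J=V-\frac{(r-s)f'}{2s}$, one has $J/V=1-\frac{(r-s)f'}{2rs-f}$, and substituting $Y$ and using $s^2+rs-f=(2rs-f)+s(s-r)$ a direct calculation gives
\[
 \frac{JY}{V}=Y+\frac{(s-r)^2f'}{s}+\frac{(s-r)^3f'}{2rs-f},\qquad f'=f'(r+s).
\]
The summand $Y$ and the summand $\frac{(s-r)^2f'}{s}$ are each exactly linear in the coefficients $c_a$, and extracting $[r^Ns^N]$ (only the $a=N$ term of $P$ survives, and the resulting binomial sums collapse via $\binom{2N-1}{N}=\binom{2N-1}{N-1}$ and $\binom{2N}{N-1}-\binom{2N}{N}=-\frac{1}{N+1}\binom{2N}{N}$) yields $-\frac{1}{N+1}\binom{2N}{N}c_N$ and $-\frac{2N}{N+1}\binom{2N}{N}c_N$ respectively. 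Their sum is $-\frac{2N+1}{N+1}\binom{2N}{N}c_N=-\frac{1}{2}\binom{2N+2}{N+1}c_N$, and multiplying by the factor $\frac{N}{2}$ from the residue step reproduces the right side of \eqref{eq2}.

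The main obstacle, and the only genuinely new point compared with Theorem~\ref{mainThm}, is the third summand $\frac{(s-r)^3f'}{2rs-f}$, which is not linear in the $c_a$. In \S\ref{S2} both branches carried the potential $P$, so $V$, $Y$ and $S$ were built symmetrically from $Q(r+s)$ and $Q(r-s)$ and one could discard the non-linear contributions by an $r\leftrightarrow s$ antisymmetry argument applied to $JY/V$ as a whole; here that symmetry is broken because only one branch carries $P$. The way out is that the offending term is antisymmetric on its own: expanding $\frac{1}{2rs-f}=\sum_{k\ge0}\frac{f^k}{(2rs)^{k+1}}$ writes it as $\sum_{k\ge0}\frac{(s-r)^3f'f^k}{(2rs)^{k+1}}$, and since $f$, $f'$ and $2rs$ depend symmetrically on $r,s$ while $(s-r)^3$ is odd, every summand is odd under $r\leftrightarrow s$; hence its diagonal coefficient $[r^Ns^N]$ vanishes, exactly as in the antisymmetry argument of \S\ref{S2}. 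This also explains why \eqref{eq2} holds for every $N\ge1$ and not only for even $N$: unlike the $S$ of \S\ref{S2}, the present $S=v\,\frac{T(v,y)+y}{2}$ is not odd in $y$, so the residue step imposes no parity restriction.
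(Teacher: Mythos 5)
Your proof is correct and follows essentially the same route as the paper's own (sketched) argument: the same substitution $r+s=W$, $rs=S$ with the potential-free branch $\widehat W=V(1-y)$ replacing $\widetilde W$, the same Jacobian formula $[v^0S^N](Y)=[r^Ns^N](JY/V)$ with $J=rV_r-sV_s$, and the same resolution of the one non-linear term by $r\leftrightarrow s$ antisymmetry, handling exactly the ``tricky point'' the paper flags, namely that $V=r-f/2s$ makes $1/V$ a Laurent rather than a power series in $r,s$. Your explicit decomposition $JY/V=Y+\frac{(s-r)^2f'}{s}+\frac{(s-r)^3f'}{2rs-f}$ and the verified coefficient values $-\frac{1}{N+1}\binom{2N}{N}c_N$ and $-\frac{2N}{N+1}\binom{2N}{N}c_N$ correctly fill in the details behind the paper's remark that ``three terms are easy and together give the desired binomial coefficient times $c_N$, and the last one is antisymmetric and hence gives $0$.''
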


\begin{proof}[Proof of Theorem~\ref{conj2}] The proof follows the same lines as that of Theorem~\ref{mainThm}.
We again set $V=\sqrt{v}$, $W=(1+w)V,\;Y=(y^2-1)v$ and want to evaluate $R_N\coloneqq[V^0 S^N](Y)$, but now 
with $S \coloneqq v(T(v,y)+y)/2$. This time we define the new local coordinates $r$ and~$s$ by $r=(W+(1-y)V)/2$ 
and $s=(W-(1-y)V)/2$, so that the variables $(r,s)$ are again related to $W$ and~$S$ by $r+s = W$ and $rs = S$.
The expressions $sV=rs-P((r+s)^2)/2$ and $Y=(r-s)^2-2(r-s)V$ are now different polynomials in $r$ and $s$, but we still have 
$R_N = [r^N s^N](JY/V)$ and $J = rV_r-sV_s$. 
Then $JY/V=-2(r-s)J+(r-s)^2(1+(r(sV)_r-s(sV)_s)/(sV))$. The first two terms
are easy and give the desired binomial coefficient times $c_N$, and the last one is antisymmetric and 
hence gives~$0$.
The only slightly tricky point is that $(r(sV)_r - s(sV)_s)/(sV)$  is no longer a power 
series in $r$ and~$s$, but instead a power series in the infinitesimal variables whose
coefficients are Laurent polynomials in $r$ an~$s$, rather than polynomials as before.
\end{proof}

\medskip
\section*{Acknowledgments}
The first author was supported by the public grant ``Jacques Hadamard'' as part of the 
Investissement d'avenir project, reference ANR-11-LABX-0056-LMH, LabEx LMH and currently receives 
funding from  the  European  Research Council  (ERC)  under  the  European  Union's Horizon  2020  
research and  innovation  programme  (grant  agreement  No.~ERC-2016-STG 716083  ``CombiTop''). 
She is also grateful to the Max Planck Institute for Mathematics in Bonn and the International 
Centre for Theoretical Physics for visits that made this work possible, and to R.~Kramer, 
D.~Lewa\'nski and S.~Shadrin for useful discussions.

\printbibliography


\end{document}